\newtheorem{theorem}{Theorem}[section]
\numberwithin{equation}{section}
\title{The Binomial Coefficient for Negative Arguments}
\author{M.J. Kronenburg}
\date{}
\begin{document}

\maketitle

\begin{abstract}
The definition of the binomial coefficient in terms of gamma functions
also allows non-integer arguments.
For nonnegative integer arguments the gamma functions reduce to factorials,
leading to the well-known Pascal triangle.
Using a symmetry formula for the gamma function, this definition is extended to negative
integer arguments, making the symmetry identity for binomial coefficients
valid for all integer arguments.
The agreement of this definition with some other identities
and with the binomial theorem is investigated.
\end{abstract}

\noindent
\textbf{Keywords}: binomial coefficient, gamma function.\\
\textbf{MSC 2010}: 05A10

\section{Binomial Coefficients and the Gamma Function}

The definition of the binomial coefficient in terms of gamma functions
for complex $x$, $y$ is \cite{AS70}:
\begin{equation}\label{binomdef}
 \binom{x}{y} = \frac{\Gamma(x+1)}{\Gamma(y+1)\Gamma(x-y+1)}
\end{equation}
For nonnegative integer $n$ and integer $k$ this reduces to \cite{AS70}:
\begin{equation}\label{binomidef}
 \binom{n}{k} =
 \begin{cases}
   \dfrac{n!}{k!(n-k)!} & \text{if $0\leq k\leq n$} \\
   0 & \text{otherwise} \\
 \end{cases}
\end{equation}
The value of the binomial coefficient (\ref{binomdef})
for negative integer $n$ and integer $k$ can be computed
using the following symmetry formula for the gamma function.
\begin{theorem}
For integer $a$, $b$ and complex $s$:
\begin{equation}\label{gammasym}
  \dfrac{\Gamma(s-a+1)}{\Gamma(s-b+1)} = (-1)^{b-a}\dfrac{\Gamma(b-s)}{\Gamma(a-s)}
\end{equation}
\end{theorem}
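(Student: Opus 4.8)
The plan is to recast the claimed identity (\ref{gammasym}) in a symmetric form that removes the asymmetry between $a$ and $b$. Cross-multiplying, (\ref{gammasym}) is equivalent to
\[
  (-1)^a\,\Gamma(s-a+1)\,\Gamma(a-s) = (-1)^b\,\Gamma(s-b+1)\,\Gamma(b-s),
\]
so it suffices to prove that the quantity $g(a) = (-1)^a\,\Gamma(s-a+1)\,\Gamma(a-s)$ does not depend on the integer $a$. Then $g(a)=g(b)$ is exactly what is wanted.

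My preferred route proves $g$ is constant by induction using only the functional equation $\Gamma(z+1)=z\,\Gamma(z)$. To compare $g(a+1)$ with $g(a)$, I would write $\Gamma(a+1-s) = (a-s)\,\Gamma(a-s)$ and $\Gamma(s-a) = \Gamma(s-a+1)/(s-a)$, so that the factor $(a-s)/(s-a) = -1$ cancels the extra sign coming from $(-1)^{a+1}$, yielding $g(a+1)=g(a)$. Running this both upward and downward from any base point shows $g$ is constant on the integers, giving $g(a)=g(b)$. An alternative, one-line route uses the reflection formula $\Gamma(z)\Gamma(1-z)=\pi/\sin(\pi z)$ with $z=s-a+1$ (so $1-z=a-s$), giving $\Gamma(s-a+1)\Gamma(a-s)=\pi/\sin(\pi(s-a+1))$; since $a$ is an integer, $\sin(\pi(s-a+1)) = (-1)^a\sin(\pi(s+1))$, hence $g(a)=\pi/\sin(\pi(s+1))$, manifestly independent of $a$.

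The main obstacle is bookkeeping of the gamma function's poles. When $s$ is an integer, some of the four factors in (\ref{gammasym}) blow up while their reflection partners vanish, so manipulating the ratios pointwise is not legitimate there. I would handle this by first establishing the identity on the region where $s$ is non-integer (all four factors finite and nonzero, and all cancellations of poles against zeros avoided), and then extend to every complex $s$ by analytic continuation, both sides being meromorphic functions of $s$ that agree on an open set. The induction step itself is routine; verifying that the sign and the factor $(a-s)/(s-a)$ combine correctly is the only place needing care.
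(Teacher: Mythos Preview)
Your proposal is correct. Your alternative route via the reflection formula is essentially the paper's proof: the paper applies $\Gamma(z)\Gamma(1-z)=\pi/\sin(\pi z)$ to the pair $\Gamma(s-a+1),\Gamma(a-s)$ and likewise with $b$, then uses $\sin(\pi(b-s))=(-1)^{b-a}\sin(\pi(a-s))$, which is exactly your observation that $(-1)^a\,\Gamma(s-a+1)\Gamma(a-s)=\pi/\sin(\pi(s+1))$ is independent of the integer $a$, just written as a direct ratio comparison rather than via the auxiliary function $g$.

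Your \emph{primary} route, however, is genuinely different: it uses only the recursion $\Gamma(z+1)=z\,\Gamma(z)$ and an induction on $a$, never invoking the reflection formula at all. This is more elementary (it needs no trigonometry and no knowledge of $\Gamma$ beyond its functional equation) and in fact shows that the identity holds for any function satisfying $\Gamma(z+1)=z\,\Gamma(z)$, not just Euler's gamma. The trade-off is that the reflection-formula argument is a one-liner once that formula is in hand, whereas the inductive argument requires stepping from $a$ to $b$ one unit at a time. Your remark about handling integer $s$ by first working on non-integer $s$ and then passing to the limit (or continuing the meromorphic ratio) is a point of rigor the paper leaves implicit.
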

\begin{proof}
The reflection formula for the gamma function is \cite{AS70,GR07}:
\begin{equation}\label{gammarefl}
 \Gamma(s)\Gamma(1-s) = \frac{\pi}{\sin(\pi s)}
\end{equation}
Combination of this formula with a special case of the addition formula for the sine
function \cite{AS70,GR07}:
\begin{equation}\label{gammasin}
 \sin(\pi(b-s)) = (-1)^{b-a} \sin(\pi(a-s))
\end{equation}
yields the result.
\end{proof}
The recursion formula for the gamma function, which is also used below,
is for complex $s$ \cite{AS70}:
\begin{equation}\label{gammadef}
 \Gamma(s+1) = s\,\Gamma(s)
\end{equation}

\section{Binomial Coefficient for Negative First Argument}

When a gamma function in the definition of the binomial coefficient
(\ref{binomdef}) has nonpositive integer argument, the value
of that gamma function is infinite.
The binomial coefficient (\ref{binomdef}) for nonnegative integer $x$
and integer $y$ can have at most one denominator gamma function
with nonpositive integer argument, in which case the binomial coefficient
becomes zero, and this is expressed by identity (\ref{binomidef}).
For negative integer $x$ and integer $y$ the numerator and at least one of the denominator
gamma functions have nonpositive integer arguments.
In this case (\ref{gammasym}) with $s=0$ can be used to replace two
of these infinite gamma functions with finite ones, which yields computable
expressions for all integers.
This results in the following binomial coefficient identity,
which with identity (\ref{binomidef}) allows computation of the
binomial coefficient for all integer arguments.\\
\begin{theorem}
For negative integer $n$ and integer $k$:
\begin{equation}\label{binominegdef}
 \binom{n}{k} =
 \begin{cases}
   \displaystyle (-1)^k \binom{-n+k-1}{k} & \text{if $k\geq 0$} \\
   \displaystyle (-1)^{n-k} \binom{-k-1}{n-k} & \text{if $k\leq n$} \\
   0 & \text{otherwise} \\
 \end{cases}
\end{equation}
\end{theorem}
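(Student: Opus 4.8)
The plan is to evaluate the defining expression (\ref{binomdef}) in each of the three cases by invoking the symmetry formula (\ref{gammasym}) at $s=0$, namely $\Gamma(1-a)/\Gamma(1-b)=(-1)^{b-a}\Gamma(b)/\Gamma(a)$, in order to replace a ratio of two gamma functions having nonpositive integer arguments by a ratio of two gamma functions having positive integer arguments. First I would record which of the three factors in $\binom{n}{k}=\Gamma(n+1)/[\Gamma(k+1)\Gamma(n-k+1)]$ is infinite: since $n$ is a negative integer, $\Gamma(n+1)$ always has a nonpositive integer argument, while $\Gamma(k+1)$ is infinite exactly when $k<0$ and $\Gamma(n-k+1)$ is infinite exactly when $k>n$. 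This classification is what selects the correct pair of factors to combine in each case.

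In the case $k\geq 0$ the two infinite factors are $\Gamma(n+1)$ and $\Gamma(n-k+1)$. I would apply (\ref{gammasym}) with $a=-n$ and $b=k-n$ to the ratio $\Gamma(n+1)/\Gamma(n-k+1)$, which produces $(-1)^{k}\,\Gamma(k-n)/\Gamma(-n)$ with both remaining arguments positive integers. Substituting back yields $\binom{n}{k}=(-1)^{k}\Gamma(k-n)/[\Gamma(k+1)\Gamma(-n)]$, and I would then verify directly from (\ref{binomdef}) that $\Gamma(k-n)/[\Gamma(k+1)\Gamma(-n)]=\binom{-n+k-1}{k}$, using $(-n+k-1)-k+1=-n$.

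In the case $k\leq n$ (so that $k$ is itself a negative integer) the two infinite factors are $\Gamma(n+1)$ and $\Gamma(k+1)$. I would apply (\ref{gammasym}) with $a=-n$ and $b=-k$ to the ratio $\Gamma(n+1)/\Gamma(k+1)$, obtaining $(-1)^{n-k}\Gamma(-k)/\Gamma(-n)$, and then identify $\Gamma(-k)/[\Gamma(n-k+1)\Gamma(-n)]=\binom{-k-1}{n-k}$ since $(-k-1)-(n-k)+1=-n$. For the remaining case $n<k<0$ all three gamma factors are infinite; after applying (\ref{gammasym}) to any two of them (say as in the first case), a single gamma function with nonpositive integer argument survives in the denominator, so the quotient is zero, matching the stated value.

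The routine part is the arithmetic bookkeeping of arguments and sign exponents. The step I expect to require the most care is the justification that (\ref{gammasym}) may legitimately be invoked when the two gamma functions on its left-hand side both sit at poles: there the ratio is a priori an indeterminate $\infty/\infty$, and the formula must be read as supplying the value of the removable singularity. I would handle this by regarding both sides as meromorphic functions of $s$ that agree for non-integer $s$ and hence everywhere, so that the finite right-hand side at $s=0$ furnishes the limiting value of $\binom{n}{k}$; this is precisely the sense in which (\ref{binomdef}) is to be extended continuously to negative integer arguments.
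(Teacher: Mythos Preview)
Your proposal is correct and follows essentially the same route as the paper: the same case split according to which of the three gamma factors in (\ref{binomdef}) sit at poles, the same application of (\ref{gammasym}) at $s=0$ with the identical choices $a=-n$, $b=-n+k$ (resp.\ $b=-k$), and the same treatment of the middle case $n<k<0$. Your added paragraph about interpreting (\ref{gammasym}) as an identity of meromorphic functions so that the $\infty/\infty$ ratio is resolved by the finite right-hand side is a welcome clarification that the paper leaves implicit.
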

\begin{proof}
When $k\geq 0$ the numerator and second denominator gamma functions in (\ref{binomdef})
have nonpositive integer argument. These two gamma functions
are substituted by (\ref{gammasym}) with $s=0$, $a=-n$ and $b=-n+k$,
yielding the first case of (\ref{binominegdef}).
When $k\leq n$ the numerator and first denominator gamma functions in (\ref{binomdef})
have nonpositive integer argument. These two gamma functions
are substituted by (\ref{gammasym}) with $s=0$, $a=-n$ and $b=-k$,
yielding the second case of (\ref{binominegdef}).
When $n<k<0$ all three gamma functions in (\ref{binomdef}) have
nonpositive integer argument. In this case any of the above substitutions
leaves only one denominator gamma function with nonpositive integer argument,
which is infinite, thus yielding the third case of (\ref{binominegdef}).
\end{proof}
Using a different method this was proved earlier in literature \cite{S08}.
The two nonzero cases are mutually transformed by replacing $k$ by $n-k$,
so that the symmetry identity:
\begin{equation}\label{intsym}
 \binom{n}{k} = \binom{n}{n-k}
\end{equation}
is preserved for all integer values of $n$ and $k$.
For derivation of certain binomial coefficient identities
these two expressions are therefore equivalent.\\
When taking $s$ in (\ref{gammasym}) to be a small complex deviation $\delta$ from the integer
arguments $n$ and/or $k$, for each case the following identities result:
\begin{equation}\label{cont1}
 \binom{n+\delta}{k} = (-1)^k \binom{-(n+\delta)+k-1}{k}
\end{equation}
\begin{equation}\label{cont2}
 \binom{n+\delta}{k+\delta} = (-1)^{n-k} \binom{-(k+\delta)-1}{n-k}
\end{equation}
The right side binomial coefficients of these two identities have near nonnegative integer arguments
and are therefore continuous.
The left side binomial coefficients are therefore also continuous at these integer arguments
in these directions in complex \mbox{$x,y$} argument space,
but not in all directions.

\section{Agreement with some Other Identities}

There are some identities which should remain valid under (\ref{binominegdef}).
The symmetry identity \cite{GKP94,K97} and the trinomial revision identity \cite{GKP94,K97}
are valid for all complex $x$, $y$, $z$:
\begin{equation}\label{binomsym}
 \binom{x}{y} = \binom{x}{x-y}
\end{equation}
\begin{equation}\label{binomtri}
 \binom{x}{y}\binom{y}{z} = \binom{x}{z}\binom{x-z}{y-z}
\end{equation}
These two identities follow directly from definition (\ref{binomdef}),
and remain valid under (\ref{binominegdef}) \cite{S08}.
The absorption identity \cite{GKP94,K97} is valid for all
complex $x$, $y$ (except $y=0$, see below):
\begin{equation}\label{binomabs}
 \binom{x}{y} = \frac{x}{y}\binom{x-1}{y-1}
\end{equation}
The addition identity \cite{GKP94,K97} is valid for all
complex $x$, $y$ (except \mbox{$x=y=0$}, see below and \cite{S08}):
\begin{equation}\label{binomadd}
 \binom{x}{y} = \binom{x-1}{y} + \binom{x-1}{y-1}
\end{equation}
These two identities follow from definition (\ref{binomdef})
and gamma function property (\ref{gammadef}),
and remain valid under (\ref{binominegdef}) with the exceptions mentioned.
These exceptions follow from the fact that gamma function
property (\ref{gammadef}) with $s=0$ yields \mbox{$1=0\cdot\infty$},
which when true is non-associative.
This explains that in terms of gamma functions the absorption identity (\ref{binomabs})
is always true, but in terms of binomial coefficients it is always true except for $y=0$.
When the binomial coefficients in the addition identity
(\ref{binomadd}) are substituted with gamma functions,
and the common factor is eliminated using the gamma function property
(\ref{gammadef}), then the following equation results:
\begin{equation}\label{binomaddeval}
 \frac{x}{y(x-y)}\frac{\Gamma(x)}{\Gamma(y)\Gamma(x-y)}
 = \left[ \frac{1}{y} + \frac{1}{x-y} \right] \frac{\Gamma(x)}{\Gamma(y)\Gamma(x-y)}
\end{equation}
which is trivially true when $x\neq 0$ or $y\neq 0$.
For the non-trivial case this expression should be evaluated in the limit to \mbox{$x=y=0$}.
From the gamma function property (\ref{gammadef}) follows that for nonnegative integer $n$ at $x=0$:
\begin{equation}\label{gammaneg}
 \frac{1}{\Gamma(x-n)} = (-1)^n n! \, x + O(x^2)
\end{equation}
which means that the following may be substituted at $x=y=0$:
\begin{equation}
 \frac{\Gamma(x)}{\Gamma(y)\Gamma(x-y)} = \frac{y(x-y)}{x}
\end{equation}
The left side of (\ref{binomaddeval}) reduces to $1$, and the right side
becomes:
\begin{equation}
 \left[ \frac{1}{y} + \frac{1}{x-y} \right] \frac{y(x-y)}{x}
 = \frac{x-y}{x} + \frac{y}{x}
\end{equation}
The right side of this equation evaluates to $1$, except when the limit
to \mbox{$x=y=0$} is taken for both terms separately, because then for both terms
$0/0=1$ and the sum evaluates to $2$.
This is a consequence of the fact that addition and division
are non-associative for infinitely small numbers.
This explains that in terms of gamma functions the addition identity (\ref{binomadd})
is always true, but in terms of binomial coefficients it is always true except for \mbox{$x=y=0$}.\\

\section{Agreement with the Binomial Theorem}

The binomial theorem for nonnegative integer power \cite{AS70,GR07}
defines the binomial coefficients of nonnegative integer arguments in terms of a finite
series, which is the Taylor expansion of \mbox{$x+y$} to the power $n$ in terms of $x$ at \mbox{$x=0$}.\\
For nonnegative integer $n$ and complex $x$, $y$:
\begin{equation}\label{binomsumpos}
 (x+y)^n = \sum_{k=0}^{n} \binom{n}{k} y^{n-k} x^{k}
\end{equation}
Changing all $k$ into $n-k$ in the summation term reverses the direction of summation
and yields this identity with $x$ and $y$ interchanged, which is the Taylor expansion
in terms of $y$ at \mbox{$y=0$}.
Therefore for nonnegative integer $n$ the Taylor expansions in terms of $x$ or $y$ are
identical finite series.
For negative integer power $n$ there is the Taylor expansion in terms of $x$ at \mbox{$x=0$}.\\
For negative integer $n$ and complex $x$, $y$:
\begin{equation}\label{binomsumnega}
 (x+y)^{n} = \sum_{k=0}^{\infty} (-1)^k \binom{-n+k-1}{k} y^{n-k} x^{k}
\end{equation}
In this identity appear the binomial coefficients of the first case of (\ref{binominegdef}).
This series converges only when \mbox{$|x|<|y|$}.
The binomial coefficients of the second case of (\ref{binominegdef}) yield a different identity.\\
For negative integer $n$ and complex $x$, $y$:
\begin{equation}\label{binomsumnegb}
\begin{split}
 (x+y)^{n} & = \sum_{k=n}^{-\infty} (-1)^{n-k} \binom{-k-1}{n-k} y^{n-k} x^{k}\\
           & = \sum_{k=0}^{\infty} (-1)^k \binom{-n+k-1}{k} x^{n-k} y^k
\end{split}
\end{equation}
This identity is (\ref{binomsumnega}) with $x$ and $y$ interchanged,
and therefore the Taylor expansion in terms of $y$ at \mbox{$y=0$}.
This series converges only when \mbox{$|x|>|y|$}.
For negative integer $n$ the Taylor expansions in terms of $x$ or $y$
are two different infinite series.
When taking \mbox{$y=1$} in (\ref{binomsumpos}) to (\ref{binomsumnegb}),
it appears that for nonnegative integer $n$ there is
one finite series for any $x$, but for negative integer $n$ there are two infinite series,
one converging only when \mbox{$|x|<1$} and one only when \mbox{$|x|>1$}.
These two series are represented by the two cases in identity (\ref{binominegdef}).\\

\section{Conclusion}

The binomial coefficient identities (\ref{binomdef}), (\ref{binomidef}) and
(\ref{binominegdef}) define the binomial coefficient as a continuous function
for all complex (including all integer) arguments,
except for negative integer $x$ and non-integer $y$, in which case the binomial coefficient
is infinite.
This definition is in agreement with the binomial theorem.
With this definition the identities (\ref{binomsym}) to (\ref{binomadd}) are always true
in terms of gamma functions,
albeit with the exceptions mentioned in (\ref{binomabs}) and (\ref{binomadd})
in terms of binomial coefficients.
When using (\ref{binomabs}) or (\ref{binomadd}), for example in the derivation of
combinatorial identities or other applications, it is important to be aware of these exceptions,
although they may involve only binomial coefficients with one or two zero arguments
and may therefore not be a problem in practical cases.
Because symmetry (\ref{intsym}) and continuity in certain directions
(\ref{cont1}) and (\ref{cont2}) may be more important than these exceptions,
this definition may be preferred above other definitions.

\pdfbookmark[0]{References}{}

\end{document}